\theoremstyle{plain}
\newtheorem{Main}{Theorem}
\newtheorem{Thm}{Theorem}[section]
\newtheorem{Prop}[Thm]{Proposition}
\newtheorem{Lem}[Thm]{Lemma}
\newtheorem{Cor}[Thm]{Corollary}
\theoremstyle{remark}
\newtheorem{Def}[Thm]{Definition}
\def\max{\operatorname{max}}
\begin{document}
	
	\title[non-additive neutralized Bowen topological pressure]
	{Variational principle for non-additive neutralized Bowen topological pressure}
	\author{Congcong Qu}
	\address{Congcong Qu, College of Big Data and software Engineering, Zhejiang Wanli University, Ningbo, 315107, Zhejiang, P.R.China}
	\email{congcongqu@foxmail.com}
	
	\author{Lan Xu \textsuperscript{*}}
	\address{Lan Xu, Department of Mathematics and Physics, Suzhou Vocational University, Suzhou 215104, Jiangsu, P.R.China}
	\email{xlan@jssvc.edu.cn}

	\date{\today}

	\thanks{\textsuperscript{*}Corresponding author}
	
	\subjclass[2010]{37B40, 37C45, 37D35}
	
	\keywords{Neutralized topological pressure, Neutralized Katok's pressure, variational principle, non-additive potential}

	\begin{abstract}
		Ovadia and Rodriguez-Hertz \cite{OH} defined the neutralized Bowen open ball as 
		$$B_n(x,e^{-n\varepsilon})=\{y\in X:d(T^j(x),T^j(y))<e^{-n\varepsilon},\forall 0\leq j\leq n-1\}.$$
		Yang, Chen and Zhou \cite{YCZ} introduced the notion of neutralized Bowen topological entropy of subsets by replacing the usual Bowen ball by neutralized Bowen open ball. And they established variational principles for this notion. In this note, we extend this result to the non-additive neutralized Bowen topological pressure and established the variational principle for non-additive potentials with tempered distortion.
	\end{abstract}
	
	\maketitle
	
	
	\section{Introduction}
	Given a compact metric space $X$ with a metric $d$ and a continuous self-map $T$ on $X$, we say $(X,T)$ is a topological dynamical system (TDS for short). Denote the set of all Borel probability measures on $X$ by $M(X)$.
	
	The topological entropy was introduced by Adler, Konheim and McAndrew \cite{AKM} to describe the topological complexity of dynamical systems. Later, Bowen \cite{Bow71} gave equivalent definitions for topological entropy  by spanning set and separated set. Besides, he \cite{Bow73} gave a dimensional description of entropy, which led to fruitful results in dimension theory, ergodic theory, multifractal analysis and other fields of dynamical systems. The topological entropy  was generalized to topological pressure for $\mathbb{Z}-$action on a compact set by Ruelle \cite{R} for expansive maps and in the general case by Walters \cite{Wal76}. Also Ruelle \cite{R} formulated a variational principle for topological pressure and Walters \cite{Wal76} generalized this to general continuous maps on the compact metric space. Pesin and Pitskel' \cite{PP84} generalized this to arbitrary subsets and established a variational principle for arbitrary subsets. Barreira \cite{Ba96} introduced a non-additive topological pressure and established a non-additive version of the classical variational principle. Cao, Feng and Huang \cite{CFH} gave the notion of the sub-additive topological pressure and formulated a variational principle for this. Feng and Huang established a variational principle of the topological entropy for non-empty compact set, not necessarily invariant. Tang, Cheng and Zhao generalized this to topological pressure. 
	
	Recently, Ovadia and Rodriguez-Hertz \cite{OH} defined the neutralized Bowen open ball as 
	$$B_n(x,e^{-n\varepsilon})=\{y\in X:d(T^j(x),T^j(y))<e^{-n\varepsilon},\forall 0\leq j\leq n-1\}.$$
	Yang, Chen and Zhou \cite{YCZ} introduced the notion of neutralized Bowen topological entropy of subsets by replacing the usual Bowen ball by neutralized Bowen open ball. And they established variational principles for this notion. Sarkooh, Ehsani, Pashaei and Abdi \cite{SEPA} generalized this to non-autonomous dynamical systems.
	
	In this note, we extend Yang, Chen and Zhou 's result \cite{YCZ} to non-additive neutralized Bowen topological pressure and establish a variational principle for non-additive potentials with tempered distortion. This includes the classical additive ones and non-additive ones with bounded distortion, especially the sub-additive topological pressure of the sub-additive potentials with bounded distortion. 
	
	Given a TDS $(X,T)$ and a sequence of continuous potentials $\varPhi=\{\varphi_n: X\rightarrow \mathbb{R}\}$. Denote by $P^{\tilde{B}}_Z(T,\varPhi)$ the non-additive neutralized Bowen topological pressure of $T$ with respect to $\varPhi$ on the set $Z$, $\underline{P}_{\mu}^{\widetilde{BK}}(T,\varPhi,\varepsilon)$ the lower neutralized Brin-Katok local pressure of the Borel probability measure $\mu$ and $P_{\mu}^{\widetilde{K}}(T,\varPhi,\varepsilon)$ the neutralized Katok pressure of $\mu$. See Section \ref{Preliminary} for details. The main result in this note is the following.  
	\begin{Main}
		Let $(X,T)$ be a TDS, $\varPhi=\{\varphi_n: X\rightarrow \mathbb{R}\}$ be a sequence of continuous potentials and $Z$ be a non-empty compact subset of $X$. We assume that $\{\varphi_n\}_{n\in\mathbb{N}}$ satisfy the tempered distortion condition 
		\begin{align}\label{tempered}
			\lim_{\varepsilon\rightarrow 0}\limsup_{n\rightarrow \infty}\frac{\varphi_n(\varepsilon)}{n}=0,
		\end{align}
		where $\varphi_n(\varepsilon)=\sup\{|\varphi_n(x)-\varphi_n(y)|:x\in X, y\in B_n(x,e^{-n\varepsilon})\}.$
		Then 
		\begin{align*}
			P^{\tilde{B}}_Z(T,\varPhi)&=\lim_{\varepsilon\rightarrow 0}\sup\{\underline{P}_{\mu}^{\widetilde{BK}}(T,\varPhi,\varepsilon):\mu \in M(X),\, \mu(Z)=1\}\\
			&=\lim_{\varepsilon\rightarrow 0}\sup\{P_{\mu}^{\widetilde{K}}(T,\varPhi,\varepsilon):\mu \in M(X),\, \mu(Z)=1\}.
		\end{align*}
	\end{Main}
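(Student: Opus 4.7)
The plan is to establish the two stated equalities through the sandwich
\[
\underline{P}_\mu^{\widetilde{BK}}(T,\varPhi,\varepsilon)\leq P_\mu^{\widetilde{K}}(T,\varPhi,\varepsilon)\leq P^{\tilde{B}}_Z(T,\varPhi)
\]
(valid for every $\mu$ with $\mu(Z)=1$ and every small $\varepsilon>0$), together with the reverse inequality
\[
P^{\tilde{B}}_Z(T,\varPhi)\leq \lim_{\varepsilon\to 0}\sup\{\underline{P}_\mu^{\widetilde{BK}}(T,\varPhi,\varepsilon):\mu\in M(X),\,\mu(Z)=1\}.
\]
Taking the supremum over admissible $\mu$ and sending $\varepsilon\to 0$ collapses the chain to the two claimed identities. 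The tempered distortion hypothesis \eref{tempered} is used throughout to exchange $\varphi_n(x)$ and $\varphi_n(y)$ on a neutralized ball $B_n(x,e^{-n\varepsilon})$, paying only the sub-exponential cost $\varphi_n(\varepsilon)/n=o(1)$.

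I would dispose of the easy steps first. The inequality $\underline{P}_\mu^{\widetilde{BK}}\leq P_\mu^{\widetilde{K}}$ follows by unwinding the definitions: on a positive-$\mu$-measure set where the liminf defining the lower neutralized Brin--Katok pressure exceeds $s$, an Egorov-type reduction produces, for large $n$, $(n,e^{-n\varepsilon})$-separated families whose $\varPhi$-weighted cardinality grows at rate at least $s$. The middle inequality $P_\mu^{\widetilde{K}}(T,\varPhi,\varepsilon)\leq P^{\tilde{B}}_Z(T,\varPhi)$ when $\mu(Z)=1$ follows from a Vitali / $5r$-type covering argument: a maximal $(n,e^{-n\varepsilon})$-separated subset of a Katok set inside $Z$ doubles as a cover of that set by neutralized balls, and tempered distortion lets us bound the $\varPhi$-sum in terms of $\varphi_n$ at the centers.

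The main obstacle is the reverse inequality. I would adapt the Feng--Huang scheme to the neutralized non-additive setting, following the spirit of Tang--Cheng--Zhao. Fix $s<P^{\tilde{B}}_Z(T,\varPhi)$ and small $\varepsilon>0$. The Carath\'eodory-type definition of $P^{\tilde{B}}_Z$ forces every admissible cover of $Z$ by neutralized balls $B_{n_i}(x_i,e^{-n_i\varepsilon})$ with $n_i\geq N$ to have large $\varPhi$-weighted sum at level $s$; from this I extract nested finite subsets $E_k\subseteq Z$ that are $(n_k,e^{-n_k\varepsilon})$-separated and carry $\varPhi$-weight at least $e^{n_k s}$. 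A Moran-type Cantor construction then produces a compact $K\subseteq Z$ and a probability measure $\mu=\lim_k\mu_k$, with $\mu_k$ distributing mass on $E_k$ proportionally to $e^{\varphi_{n_k}}$. The heart of the argument is the generalised mass distribution principle
\[
\mu\bigl(B_n(x,e^{-n\varepsilon})\bigr)\leq C_n\,e^{-ns}\,e^{\varphi_n(x)},\qquad \tfrac{1}{n}\log C_n\to 0,
\]
from which $\underline{P}_\mu^{\widetilde{BK}}(T,\varPhi,\varepsilon)\geq s-\eta(\varepsilon)$ with $\eta(\varepsilon)\to 0$ by \eref{tempered}.

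The hardest technical point is precisely this mass distribution step in the neutralized regime: because the radius $e^{-n\varepsilon}$ itself shrinks exponentially, one must check that neither the Besicovitch-type covering constant used to convert $e^{-n\varepsilon}$-balls into disjoint subfamilies, nor the oscillation of $\varphi_n$ across such balls, pollutes the exponential rate. Tempered distortion handles the latter via \eref{tempered}, and once $\varepsilon$ is fixed the former is uniform in $n$; this is exactly where the hypothesis is genuinely used, and where bounded distortion would be overkill. The remaining steps --- extracting a weak-$*$ limit to produce $\mu$, verifying $\mu(K)=1$ and $K\subseteq Z$ (hence $\mu(Z)=1$) from the compactness of $Z$ and the separation structure of the $E_k$, and finally sending $\varepsilon\to 0$ --- are then routine.
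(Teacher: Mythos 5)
Your overall architecture coincides with the paper's: the chain $\underline{P}_{\mu}^{\widetilde{BK}}(T,\varPhi,\varepsilon)\leq P_{\mu}^{\widetilde{K}}(T,\varPhi,2\varepsilon)$ and $P_{\mu}^{\widetilde{K}}(T,\varPhi,\varepsilon)\leq M_{\varepsilon}(Z,\varPhi)$ for $\mu(Z)=1$, plus a converse bound via a Frostman-type measure. The two easy inequalities are fine; in fact the second is immediate from the definitions, since any cover of $Z$ by neutralized balls is admissible for $\Lambda^{s}_{N,\varepsilon}(\mu,\varPhi,\delta)$ when $\mu(Z)=1$, so no Vitali argument is needed there, and the passage from $\varepsilon$ to $2\varepsilon$ in the Brin--Katok-to-Katok comparison washes out in the limit.

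The gap is in the hard direction. What you describe there is not the Feng--Huang/Tang--Cheng--Zhao scheme but a Moran-type Cantor construction, and in this generality it does not go through. First, positivity of the Carath\'eodory quantity $M^{s}_{N,\varepsilon}(Z,\varPhi)$ bounds from below the weight of every cover of $Z$, hence yields separated sets of large weighted cardinality, but it does not by itself produce the \emph{nested tree} of sets $E_k$ with controlled branching at every node: for that you would need positivity of the outer measure of $Z$ intersected with each chosen ball, and selecting such balls is exactly the difficulty that the weighted pressure $W^{s}_{N,\varepsilon}$ together with the Frostman lemma (proved by a min--max/Hahn--Banach argument as in Feng--Huang, which is the route the paper takes) is designed to circumvent. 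Second, and fatally in the non-additive setting: your Moran measure assigns to a level-$k$ atom a mass proportional to a product of weights $e^{\varphi_{n_j}}$ along its branch, while the mass distribution principle $\mu(B_n(x,e^{-n\varepsilon}))\leq C_n e^{-ns}e^{\varphi_n(x)}$ requires comparing that product with $e^{\varphi_n(x)}$ at intermediate orders $n$. Such a comparison needs some approximate additivity $\varphi_{n+m}\approx\varphi_n+\varphi_m\circ T^{n}$, which is \emph{not} assumed here: $\varPhi$ is an arbitrary sequence satisfying only the tempered distortion condition \eref{tempered}, so $\varphi_{n_k}$ and $\varphi_n$ are a priori unrelated. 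The paper's argument avoids both problems because its Frostman lemma controls $\mu(B_n(x,e^{-n\varepsilon}))$ at each fixed order $n\geq N$ directly and never multiplies weights across scales; to repair your proof you would either have to adopt that route or impose a (sub)additivity hypothesis on $\varPhi$.
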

	This paper is organized as follows. In section \ref{Preliminary}, we recall some basic definitions. In section \ref{proof}, we give the details of the proof of the main result. 
	\section{Preliminary}\label{Preliminary}
	In this section, we introduce some notions to appear in this note, which generalize the corresponding ones in \cite{YCZ}.
	
	\subsection{Non-additive neutralized Bowen topological pressure}Given $n\in \mathbb{N}$, $x,y\in X$, the $n-$th Bowen metric $d_n$ is defined as 
	$$d_n(x,y)\triangleq \max_{0\leq j\leq n-1}d(T^j(x),T^j(y)).$$
	The Bowen open ball of radius $\varepsilon$ and order $n$ in the metric $d_n$ around $x$ is given by 
	$$B_n(x,\varepsilon)=\{y\in X:d_n(x,y)<\varepsilon\}.$$ 
	In \cite{OH}, the authors defined the neutralized Bowen open ball by replacing the radius $\varepsilon$ in the usual Bowen ball by $e^{-n\varepsilon}$, that is 
	$$B_n(x,e^{-n\varepsilon})=\{y\in X:d_n(x,y)<e^{-n\varepsilon}\}.$$
	With this, Yang, Chen and Zhou \cite{YCZ} defined the neutralized Bowen topological entropy on subsets. We extend this to the non-additive neutralized Bowen topological pressure as follows.
	\begin{Def}\label{neutralized Bowen tp}
		Let $Z\subset X$ be a non-empty subset and $\varPhi=\{\varphi_n: X\rightarrow \mathbb{R}\}$ be a sequence of continuous potentials, and let $\varepsilon>0$, $N\in\mathbb{N}$, $s\in\mathbb{R}$. Define
		$$M^{s}_{N,\varepsilon}(Z,\varPhi)=\inf\sum_{i\in I} \exp{[-n_i s+\sup_{y\in B_{n_i}(x_i,e^{-n_i\varepsilon})}\varphi_{n_i}(y)]},$$
		where the infimum is taken over all finite or countable covers $\{B_{n_i}(x_i,e^{-n_i\varepsilon})\}_{i\in I}$ of $Z$ with $n_i\geq N$ and $x_i\in X$.
	\end{Def}
	One can check that the limit $M_{\varepsilon}^{s}(Z,\varPhi)=\lim_{N\rightarrow \infty} M_{N,\varepsilon}^{s}(Z,\varPhi)$ exists since $M^{s}_{N,\varepsilon}(Z,\varPhi)$ is non-decreasing when $N$ increases. The quantity $M^s_{\varepsilon}(Z,\varPhi)$ has a critical value of parameter $s$ jumping from $\infty$ to $0$. The critical value is defined by 
	$$M_{\varepsilon}(Z,\varPhi)\triangleq \inf\{s:M^s_{\varepsilon}(Z,\varPhi)=0\}=\sup\{s:M^s_{\varepsilon}(Z,\varPhi)=\infty\}.$$
	The {\it non-additive neutralized Bowen topological pressure of $T$ with respect to $\varPhi$ on the set $Z$} is defined by 
	$$P_Z^{\tilde{B}}(T,\varPhi)=\lim_{\varepsilon\rightarrow 0} M_{\varepsilon}(Z,\varPhi)=\inf_{\varepsilon>0} M_{\varepsilon}(Z,\varPhi).$$
	
	Assume  the potentials $\varPhi$ satisfies the tempered distortion condition (\ref{tempered}). Given $B_{n_i}(x_i,e^{-n_i\varepsilon})$ and $z\in B_{n_i}(x_i,e^{-n_i\varepsilon})$, we can replace $\sup_{y\in B_{n_i}(x_i,e^{-n_i\varepsilon})}\varphi_{n_i}(y)$ by $\varphi_{n_i}(z)$ in Definition \ref{neutralized Bowen tp} to define the non-additive neutralized Bowen topological pressure. We denote by $\tilde{M}^{s}_{N,\varepsilon}(Z,\varPhi)$, $\tilde{M}_{\varepsilon}(Z,\varPhi)$ and $\tilde{P}_Z^{\tilde{B}}(T,\varPhi)$ the new corresponding quantities of $M^{s}_{N,\varepsilon}(Z,\varPhi)$, $M_{\varepsilon}(Z,\varPhi)$ and $P_Z^{\tilde{B}}(T,\varPhi)$ respectively. 
	\begin{Prop}\label{inter}
		Assume $\varPhi$ satisfies the tempered distortion condition (\ref{tempered}). Then $$P_Z^{\tilde{B}}(T,\varPhi)=\lim_{\varepsilon\rightarrow 0}\tilde{M}_{\varepsilon}(Z,\varPhi).$$
	\end{Prop}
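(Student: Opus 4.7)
The plan is to show that the tempered distortion hypothesis makes the change from $\sup_{y\in B_{n_i}(x_i,e^{-n_i\varepsilon})}\varphi_{n_i}(y)$ to $\varphi_{n_i}(z)$ for an arbitrary $z$ in the same ball cost at most a linear-in-$n_i$ error that vanishes in the $\varepsilon\to 0$ limit, so that the two critical-exponent procedures defining $M_\varepsilon$ and $\tilde M_\varepsilon$ produce the same limit.

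The key estimate is as follows. If $y,z\in B_{n_i}(x_i,e^{-n_i\varepsilon})$, then applying the definition of $\varphi_{n_i}(\varepsilon)$ to the pairs $(x_i,y)$ and $(x_i,z)$ and using the triangle inequality gives $|\varphi_{n_i}(y)-\varphi_{n_i}(z)|\leq 2\varphi_{n_i}(\varepsilon)$, and hence
$$\Bigl|\sup_{y\in B_{n_i}(x_i,e^{-n_i\varepsilon})}\varphi_{n_i}(y)-\varphi_{n_i}(z)\Bigr|\leq 2\varphi_{n_i}(\varepsilon).$$
Given $\eta>0$, by (\ref{tempered}) I can choose $\varepsilon_0>0$ so that $\limsup_n\varphi_n(\varepsilon)/n\leq\eta$ whenever $\varepsilon<\varepsilon_0$, and then for each such $\varepsilon$ a threshold $N_0=N_0(\varepsilon,\eta)$ with $\varphi_n(\varepsilon)\leq\eta n$ for all $n\geq N_0$. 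For any cover of $Z$ by balls $B_{n_i}(x_i,e^{-n_i\varepsilon})$ with $n_i\geq N\geq N_0$ and any choice of $z_i$ in each ball, the estimate above yields the term-by-term bound
$$\exp\Bigl[-n_is+\sup_{y\in B_{n_i}(x_i,e^{-n_i\varepsilon})}\varphi_{n_i}(y)\Bigr]\leq \exp\bigl[-n_i(s-2\eta)+\varphi_{n_i}(z_i)\bigr],$$
together with the symmetric inequality obtained by replacing $s-2\eta$ by $s+2\eta$ in the reverse direction.

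Summing over the cover, infimizing, and letting $N\to\infty$ then gives $M_\varepsilon^s(Z,\varPhi)\leq \tilde M_\varepsilon^{s-2\eta}(Z,\varPhi)$ together with the reverse inequality $\tilde M_\varepsilon^s(Z,\varPhi)\leq M_\varepsilon^{s-2\eta}(Z,\varPhi)$. The standard critical-exponent translation (if the right-hand side vanishes then so does the left, whence $s\geq M_\varepsilon(Z,\varPhi)$ whenever $s-2\eta>\tilde M_\varepsilon(Z,\varPhi)$) then produces $|M_\varepsilon(Z,\varPhi)-\tilde M_\varepsilon(Z,\varPhi)|\leq 2\eta$ for every $\varepsilon<\varepsilon_0$. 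Letting $\varepsilon\to 0$ and then $\eta\to 0$ yields $\lim_{\varepsilon\to 0}M_\varepsilon(Z,\varPhi)=\lim_{\varepsilon\to 0}\tilde M_\varepsilon(Z,\varPhi)$, which is exactly the claim.

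I do not foresee any deep obstacle; the argument is essentially book-keeping once the distortion estimate and the critical-exponent formalism are in hand. The one point that must be handled with care is the uniformity of the bound $\varphi_n(\varepsilon)\leq\eta n$ along the cover: since this constrains only the index $n$, it applies simultaneously to every $n_i\geq N_0$ appearing in any admissible cover, which is precisely what allows the term-by-term estimate to survive the summation. Without this uniformity the passage from the inequality on individual terms to the inequality on $M^s_{N,\varepsilon}$ would break.
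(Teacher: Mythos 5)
Your proposal is correct and follows essentially the same route as the paper: use the tempered distortion bound $\varphi_n(\varepsilon)\leq\eta n$ for $n\geq N_0$ to compare the two set functions with a shift of the exponent $s$, pass to the critical values, and let $\varepsilon\to 0$ and the distortion parameter tend to $0$. The only (harmless) difference is that you carry the factor $2\varphi_{n_i}(\varepsilon)$ from the triangle inequality through the centre $x_i$ explicitly, which is in fact slightly more careful than the paper's bookkeeping.
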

	\begin{proof}
		By (\ref{tempered}), for $\delta>0$, there exists $\varepsilon_0>0$ such that for any $0<\varepsilon<\varepsilon_0$, there exists $N_0\in \mathbb{N}$ such that for any $n\geq N_0$, one has
		\begin{align*}
			\varphi_{n}(\varepsilon)\leq n\delta.
		\end{align*}
		Thus for $s\in \mathbb{R}$ and $N\geq N_0$, one has $M_{N,\varepsilon}^{s}(Z,\varPhi)\leq \tilde{M}_{N,\varepsilon}^{s-\delta}(Z,\varPhi)\leq M_{N,\varepsilon}^{s-\delta}(Z,\varPhi)$.
		Letting $N$ tend to $\infty$, we have $M_{\varepsilon}^{s}(Z,\varPhi)\leq \tilde{M}_{\varepsilon}^{s-\delta}(Z,\varPhi)\leq M_{\varepsilon}^{s-\delta}(Z,\varPhi)$. It follows that $M_{\varepsilon}(Z,\varPhi)\leq \tilde{M}_{\varepsilon}(Z,\varPhi)+\delta\leq M_{\varepsilon}(Z,\varPhi)+\delta$. Letting $\varepsilon>0$ tend to $0$, we have $P_Z^{\tilde{B}}(T,\varPhi)\leq \tilde{P}_Z^{\tilde{B}}(T,\varPhi)+\delta\leq P_Z^{\tilde{B}}(T,\varPhi)+\delta$. The result follows from the arbitrariness of $\delta>0$.
	\end{proof}
	\subsection{Lower neutralized Brin-Katok local pressure}
	Given $\varPhi=\{\varphi_n: X\rightarrow \mathbb{R}\}$  a sequence of continuous potentials, $\mu\in M(X)$, $\varepsilon>0$ and $x\in X$, define
	$$\underline{P}_{\mu}^{\widetilde{BK}}(T,\varPhi,\varepsilon,x)=\liminf_{n\rightarrow \infty} \frac{-\log \mu(B_n(x,e^{-n\varepsilon}))+\varphi_{n}(x)}{n},$$
	and
	$$\underline{P}_{\mu}^{\widetilde{BK}}(T,\varPhi,\varepsilon)=\int\underline{P}_{\mu}^{\widetilde{BK}}(T,\varPhi,\varepsilon,x)d\mu.$$
	The {\it lower neutralized Brin-Katok local pressure of $\mu$} is given by
	$$\underline{P}_{\mu}^{\widetilde{BK}}(T,\varPhi)=\lim_{\varepsilon\rightarrow 0}\underline{P}_{\mu}^{\widetilde{BK}}(T,\varPhi,\varepsilon).$$
	
	\subsection{Neutralized Katok's pressure}
	\begin{Def}
		Let $\varepsilon>0$, $s\in\mathbb{R}$, $N\in\mathbb{N}$, $\mu\in M(X)$, $\delta\in (0,1)$ and $\varPhi=\{\varphi_n: X\rightarrow \mathbb{R}\}$ be a sequence of continuous potentials. Put 
		$$\Lambda^s_{N,\varepsilon}(\mu,\varPhi,\delta)=\inf\sum_{i\in I}\exp{[-n_i s+\sup_{y\in B_{n_i}(x_i,e^{-n_i\varepsilon})}\varphi_{n_i}(y)]},$$
		where the infimum is taken over all finite or countable covers $\{B_{n_i}(x_i,e^{-n_i \varepsilon})\}_{i\in I}$ so that $\mu(\bigcup_{i\in I}B_{n_i}(x_i,e^{-n_i\varepsilon}))>1-\delta$ with $n_i\geq N$ and $x_i\in X$.
		
		Let $\Lambda^s_{\varepsilon}(\mu,\varPhi,\delta)=\lim_{N\rightarrow \infty} \Lambda^s_{N,\varepsilon}(\mu,\varPhi,\delta)$. There is a critical value of $s$ for $\Lambda^s_{\varepsilon}(\mu,\varPhi,\delta)$ jumping from $\infty$ to $0$. Define the critical value as 
		$$\Lambda_{\varepsilon}(\mu,\varPhi,\delta)=\inf\{s:\Lambda^s_{\varepsilon}(\mu,\varPhi,\delta)=0\}=\sup\{s:\Lambda^s_{\varepsilon}(\mu,\varPhi,\delta)=\infty\}.$$
		Let $P_{\mu}^{\tilde{K}}(T,\varPhi,\varepsilon)=\lim_{\delta\rightarrow 0} \Lambda_{\varepsilon}(\mu,\varPhi,\delta)$. The {\it neutralized Katok's pressure of $\mu$} is defined as 
		$$P_{\mu}^{\tilde{K}}(T,\varPhi)=\lim_{\varepsilon \rightarrow 0}P_{\mu}^{\tilde{K}}(T,\varPhi,\varepsilon).$$
	\end{Def}
	\section{Proof of the main result}\label{proof}
	\subsection{Non-additive neutralized weighted Bowen topological pressure}
	To give the proof of main result, we define the non-additive neutralized weighted Bowen topological pressure.
	\begin{Def}
		Let $\varPhi=\{\varphi_n: X\rightarrow \mathbb{R}\}$ be a sequence of continuous potentials and $f:X\rightarrow \mathbb{R}$ be a bounded real-valued function on $X$. Let $s\in \mathbb{R},N\in\mathbb{N}$ and $\varepsilon>0$. Define
		$$W^s_{N,\varepsilon}(f,\varPhi)=\inf\sum_{i\in I}c_i \exp{[-n_i s+\sup_{y\in B_{n_i}(x_i,e^{-n_i\varepsilon})}\varphi_{n_i}(y)]},$$
		where the infimum is taken over all finite or countable families $\{(B_{n_i}(x_i,e^{-n_i\varepsilon}),c_i)\}_{i\in I}$ with $0<c_i<\infty$, $x_i \in X$ and $n_i\geq N$ so that 
		$$\sum_{i\in I}c_i \chi_{B_{n_i}(x_i,e^{-n_i \varepsilon})}\geq f,$$ 
		where $\chi_A$ denotes the characteristic function of $A$.
		
		Let $Z\subset X$ be a non-empty subset. Set $W^s_{N,\varepsilon}(Z,\varPhi)\triangleq W^s_{N,\varepsilon}(\chi_Z,\varPhi)$. Let $W^s_{\varepsilon}(Z,\varPhi)=\lim_{N\rightarrow \infty} W^s_{N,\varepsilon}(Z,\varPhi)$. There is a critical value of $s$ such that the quantity $W^s_{\varepsilon}(Z,\varPhi)$ jumps from $\infty$ to $0$. Denote this critical value as 
		$$W_{\varepsilon}(Z,\varPhi)\triangleq\inf\{s:W^s_{\varepsilon}(Z,\varPhi)=0\}=\sup\{s:W^s_{\varepsilon}(Z,\varPhi)=\infty\}.$$
		The {\it non-additive neutralized weighted Bowen topological pressure of $T$ with respect to $\varPhi$ on the set $Z$ }is defined as 
		$$P_Z^{\widetilde{WB}}(T,\varPhi)=\lim_{\varepsilon\rightarrow 0}  W_{\varepsilon}(Z,\varPhi).$$
	\end{Def}

	The following covering lemma helps us to establish the equivalence of the non-additive neutralized Bowen topological pressure and non-additive neutralized weighted Bowen topological pressure.
	\begin{Lem}\cite{Wang}\label{cover}
		Let $(X,d)$ be a compact metric space. Let $r>0$ and $\mathcal{B}=\{B(x_i,r)\}_{i\in I}$ be a family of open balls of $X$. Define 
		\begin{align*}
			I(i)=\{j\in I:B(x_j,r)\cap B(x_i,r)\not=\varnothing\}.
		\end{align*}
		Then there exists a finite subset $J\subset I$ such that for any $i,j\in J$ with $i\not =j$, $I(i)\cap I(j)=\varnothing$ and 
		$$\bigcup_{i\in I}B(x_i,r)\subset \bigcup_{j\in J}B(x_j,5r).$$
	\end{Lem}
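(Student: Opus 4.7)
The plan is to adapt the classical Vitali $5r$-covering argument. Compared with the standard statement (which, for balls of equal radius, requires only pairwise disjoint balls and yields a $3r$-cover), the strengthened disjointness hypothesis $I(i)\cap I(j)=\varnothing$ is exactly what produces the extra factor in the $5r$-cover.

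First, I would translate the combinatorial hypothesis into a metric one. If $J\subset I$ satisfies $I(i)\cap I(j)=\varnothing$ for all distinct $i,j\in J$, then in particular $j\notin I(i)$, so $B(x_i,r)\cap B(x_j,r)=\varnothing$ and hence $d(x_i,x_j)\geq 2r$. Compactness of $X$, via total boundedness, then forces any such $J$ to be finite with a uniform bound: cover $X$ by finitely many open balls of radius $r$, say $X\subset\bigcup_{k=1}^{K}B(y_k,r)$, and observe that each $B(y_k,r)$ contains at most one center $x_j$ with $j\in J$ (otherwise two centers would lie within distance less than $2r$). Hence $|J|\leq K$, uniformly in the choice of $J$.

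Second, with this uniform cardinality bound in hand, I would select $J\subset I$ satisfying the pairwise disjointness condition and of maximum cardinality; the supremum over the integers $|J|$ is attained, so no Zorn-type argument is needed.

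Third, I would use maximality to establish the covering. Fix $i\in I$; if $i\in J$ there is nothing to check. Otherwise $J\cup\{i\}$ violates the disjointness condition, so there exist $j\in J$ and $k\in I(i)\cap I(j)$. By definition of $I(\cdot)$, the ball $B(x_k,r)$ meets both $B(x_i,r)$ and $B(x_j,r)$, giving $d(x_i,x_k)<2r$ and $d(x_k,x_j)<2r$; the triangle inequality then yields $d(x_i,x_j)<4r$, so for any $y\in B(x_i,r)$ we have $d(y,x_j)<r+4r=5r$ and $B(x_i,r)\subset B(x_j,5r)$. The main conceptual point, rather than any genuine obstacle, is recognising that the two-step combinatorial condition (existence of a common intersecting ball $B(x_k,r)$) produces a $4r$ distance bound, which is precisely why one needs a $5r$-enlargement rather than the $3r$ that suffices for the classical equal-radius Vitali lemma.
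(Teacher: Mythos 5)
Your overall strategy is the standard maximal-subfamily Vitali argument, and it is the same route as the cited source [Wang]: extract a maximal $J$ satisfying the pairwise condition $I(i)\cap I(j)=\varnothing$, then use maximality plus two applications of the triangle inequality to get the $5r$-enlargement. Steps two and three are correct as written; in particular your observation that a failure of the condition for $J\cup\{i\}$ must involve the new index $i$, and the chain $d(x_i,x_k)<2r$, $d(x_k,x_j)<2r$, $d(y,x_j)<r+4r=5r$, are exactly right.

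There is one incorrect sub-claim in your first step: from $B(x_i,r)\cap B(x_j,r)=\varnothing$ you cannot conclude $d(x_i,x_j)\geq 2r$ in a general metric space, because there need not be any point between the two centers (take $X=\{a,b\}$ with $d(a,b)=\tfrac{3}{2}r$: the open $r$-balls are the singletons $\{a\}$ and $\{b\}$, which are disjoint, yet $d(a,b)<2r$). The implication you want only uses convexity-type structure available in $\mathbb{R}^n$. Fortunately the finiteness argument survives with the correct, weaker bound: since $x_j\in B(x_j,r)$ and the balls are disjoint, $x_j\notin B(x_i,r)$, so $d(x_i,x_j)\geq r$ for distinct $i,j\in J$; covering $X$ by finitely many balls of radius $r/2$ (rather than $r$) then shows each such ball contains at most one center, giving the uniform bound $|J|\leq K$. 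With that one-line repair the proof is complete.
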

	Given a non-empty subset $Z\subset X$ and $\varepsilon>0$, $N\in\mathbb{N}$, $s\in\mathbb{R}$. Define
	$$\mathcal{M}^{s}_{N,\varepsilon}(Z,\varPhi)=\inf\sum_{i\in I} \exp{[-n_i s+\varphi_{n_i}(x_i)]},$$
	where the infimum is taken over all finite or countable covers $\{B_{n_i}(x_i,e^{-n_i\varepsilon})\}_{i\in I}$ of $Z$ with $n_i\geq N$ and $x_i\in X$. 
	
	The proof of the following result is inspired by \cite{YCZ,TCZ}.
\begin{Prop}\label{equality}
		Let $Z$ be a non-empty subset of $X$, $\varPhi=\{\varphi_n: X\rightarrow \mathbb{R}\}$ be a sequence of continuous potentials satisfying the tempered distortion condition (\ref{tempered}). Then for  $\theta>0$, there exists $\varepsilon_0>0$ such that for $0<\varepsilon<\varepsilon_0$, there exists $N_0\in\mathbb{N}$, such that for any $N\geq N_0$ and  $s\in \mathbb{R}$, 
	$$\mathcal{M}^{s+\theta}_{N,\frac{\varepsilon}{2}}(Z,\varPhi)\leq W^s_{N,\varepsilon}(Z,\varPhi)\leq M^s_{N,\varepsilon}(Z,\varPhi).$$
 Therefore, we have $P_{Z}^{\tilde{B}}(T,\varPhi)=P_{Z}^{\widetilde{WB}}(T,\varPhi)$. 
\end{Prop}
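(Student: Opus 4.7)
The right inequality $W^s_{N,\varepsilon}(Z,\varPhi)\leq M^s_{N,\varepsilon}(Z,\varPhi)$ is immediate: any cover witnessing the value of $M^s_{N,\varepsilon}$ becomes an admissible weighted family by setting $c_i=1$. The substance of the proposition lies in the reverse direction $\mathcal{M}^{s+\theta}_{N,\varepsilon/2}(Z,\varPhi)\leq W^s_{N,\varepsilon}(Z,\varPhi)$, and the plan is to adapt the Vitali-type argument of Yang--Chen--Zhou from the entropy case ($\varphi_n\equiv 0$) to the present setting, using tempered distortion to absorb the extra potential contributions into the $\theta$-slack. Fix $\theta>0$, choose $\delta\in(0,\theta)$, and use (\ref{tempered}) to pick $\varepsilon_0$ so that $\limsup_n\varphi_n(\varepsilon/2)/n\leq\delta$ for $0<\varepsilon<\varepsilon_0$. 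Then pick $N_0$ large enough that for every $n\geq N_0$ one has (i) $\varphi_n(\varepsilon/2)\leq n\delta$, (ii) $5e^{-n\varepsilon}\leq e^{-n\varepsilon/2}$, (iii) $n^2 e^{-n(\theta-\delta)}\leq 1$, and (iv) $\sum_{k\geq N_0}k^{-2}\leq 1$.

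Given $N\geq N_0$, any admissible weighted family $\{(B_{n_i}(x_i,e^{-n_i\varepsilon}),c_i)\}_{i\in I}$ for $W^s_{N,\varepsilon}(Z,\varPhi)$, and $t\in(0,1)$, stratify by $I_n=\{i\in I:n_i=n\}$ and set $Z_{n,t/n^2}=\{z\in Z:\sum_{i\in I_n}c_i\chi_{B_n(x_i,e^{-n\varepsilon})}(z)>t/n^2\}$ and $I_n^{t/n^2}=\{i\in I_n:B_n(x_i,e^{-n\varepsilon})\cap Z_{n,t/n^2}\neq\varnothing\}$. Condition (iv) and $\sum_i c_i\chi_{B_{n_i}(x_i,e^{-n_i\varepsilon})}\geq\chi_Z$ force $Z=\bigcup_{n\geq N}Z_{n,t/n^2}$. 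For each $n$, apply Lemma \ref{cover} to $\{B_n(x_i,e^{-n\varepsilon})\}_{i\in I_n^{t/n^2}}$ in the Bowen metric $d_n$ to extract $J_n\subset I_n^{t/n^2}$ with pairwise disjoint $\{I_n^{t/n^2}(j)\}_{j\in J_n}$ and $Z_{n,t/n^2}\subset\bigcup_{j\in J_n}B_n(x_j,e^{-n\varepsilon/2})$, whence
\[ \mathcal{M}^{s+\theta}_{N,\varepsilon/2}(Z_{n,t/n^2},\varPhi)\leq\sum_{j\in J_n}\exp[-n(s+\theta)+\varphi_n(x_j)]. \]
Picking $y_j\in B_n(x_j,e^{-n\varepsilon})\cap Z_{n,t/n^2}$ gives $\sum_{i\in I_n^{t/n^2}(j)}c_i>t/n^2$; because (ii) yields $B_n(x_j,e^{-n\varepsilon})\subset B_n(x_i,e^{-n\varepsilon/2})$ whenever $i\in I_n^{t/n^2}(j)$, the disjointness of the $I_n^{t/n^2}(j)$ lets us bound $\sum_{j\in J_n}\exp[\varphi_n(x_j)]$ by $(n^2/t)\sum_{i\in I_n^{t/n^2}}c_i\exp[\sup_{y\in B_n(x_i,e^{-n\varepsilon/2})}\varphi_n(y)]$. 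Applying (i) to pass from $e^{-n\varepsilon/2}$ to $e^{-n\varepsilon}$ at cost $e^{n\delta}$ and invoking (iii) to absorb the prefactor $(n^2/t)e^{n(\delta-\theta)}\leq 1/t$ yield
\[ \mathcal{M}^{s+\theta}_{N,\varepsilon/2}(Z_{n,t/n^2},\varPhi)\leq(1/t)\sum_{i\in I_n^{t/n^2}}c_i\exp\bigl[-ns+\sup_{y\in B_n(x_i,e^{-n\varepsilon})}\varphi_n(y)\bigr]. \]
Summing over $n\geq N$, taking the infimum over admissible weighted families, and letting $t\to 1$ delivers the desired left inequality.

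For the pressure equality, let $N\to\infty$ and extract critical exponents to obtain $\mathcal{M}_{\varepsilon/2}(Z,\varPhi)\leq W_\varepsilon(Z,\varPhi)+\theta\leq M_\varepsilon(Z,\varPhi)+\theta$. A direct repetition of the argument of Proposition \ref{inter} shows that $\mathcal{M}_{\varepsilon/2}$ and $M_{\varepsilon/2}$ give the same limit as $\varepsilon\to 0$, so letting $\varepsilon\to 0$ followed by $\theta,\delta\to 0$ yields $P_Z^{\tilde{B}}(T,\varPhi)=P_Z^{\widetilde{WB}}(T,\varPhi)$. The main obstacle is a triple calibration: the exponential loss $e^{n\delta}$ from tempered distortion, the polynomial loss $n^2$ from the $1/n^2$ stratification, and the ball enlargement from $B_n(x_j,e^{-n\varepsilon})$ to $B_n(x_i,e^{-n\varepsilon/2})$ must all fit inside the single $\theta$-gap, which is exactly what forces the choices $\delta<\theta$ and condition (iii) above.
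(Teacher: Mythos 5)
Your proposal is correct and follows essentially the same route as the paper: the right inequality by taking $c_i=1$, and the left inequality via the $5r$-covering lemma applied to each stratum $I_n$, with the tempered distortion loss $e^{n\delta}$ and the $n^2$ factor from the stratification both absorbed into the $\theta$-gap, followed by Proposition \ref{inter} to identify the centered-value quantity $\mathcal{M}$ with $M$ in the limit. The only (immaterial) difference is bookkeeping: the paper compares potentials by evaluating at a point $z_i$ in the intersection of the two overlapping balls, while you compare suprema over the enlarged ball $B_n(x_i,e^{-n\varepsilon/2})$.
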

\begin{proof}
	The inequality $W^s_{N,\varepsilon}(Z,\varPhi)\leq M^s_{N,\varepsilon}(Z,\varPhi)$ follows from definition. Next we show that $\mathcal{M}^{s+\theta}_{N,\frac{\varepsilon}{2}}(Z,\varPhi)\leq W^{s}_{N,\varepsilon}(Z,\varPhi)$. By (\ref{tempered}), for $\theta>0$, there exists $\varepsilon_0>0$ such that for any $0<\varepsilon<\varepsilon_0$, there exists $N_0\in \mathbb{N}$ such that for any $n\geq N_0$, one has
	\begin{align*}
		\varphi_{n}(\varepsilon)\leq \frac{n\theta}{2}.
	\end{align*}
Fix $N\geq N_0$ large enough such that $e^{\frac{n\varepsilon}{2}}>5$ and $\frac{n^2}{e^{\frac{n\theta}{2}}}<1$ for all $n\geq N$. Let $t>0$ and $n\geq N$. Let $\{(B_{n_i}(x_i,e^{-n_i\varepsilon}),c_i)\}_{i\in I}$ with $0<c_i<\infty, x_i\in X, n_i\geq N$ be a finite or countable family satisfying $\sum_{i\in I} c_i\chi_{B_{n_i}(x_i,e^{-n_i\varepsilon})}\geq \chi_{Z}$. Define $I_n=\{i\in I:n_i=n\}$ for $n\geq N$. Define 
	\begin{align}
		Z_{n,t}=\{z\in Z:\sum_{i\in I_n}c_i\chi_{B_n(x_i,e^{-n\varepsilon})}(z)>t\}
	\end{align}
	and
	\begin{align}
		I_n^t=\{i\in I_n:B_n(x_i,e^{-n\varepsilon})\cap Z_{n,t}\not=\varnothing\}.
	\end{align}
	It follows that $Z_{n,t}\subset \bigcup_{i\in I_n^t} B_n(x_i,e^{-n\varepsilon})$. Let $\mathcal{B}=\{B_n(x_i,e^{-n\varepsilon})\}_{i\in I_n^t}$. By Lemma \ref{cover}, there exists a finite subset $J\subset I^t_n$ such that
	$$\bigcup_{i\in I_n^t} B_n(x_i,e^{-n\varepsilon})\subset \bigcup_{j\in J} B_n(x_j,5e^{-n\varepsilon})\subset \bigcup_{j\in J} B_n(x_j,e^{-\frac{n\varepsilon}{2}})$$
	and for any $i,j\in J$ with $i\not= j$, one has $I_n^t(i)\cap I_n^t(j)=\varnothing$, where $$I_n^t(i)=\{j\in I_n^t:B_n(x_j,e^{-n\varepsilon})\cap B_n(x_i,e^{-n\varepsilon})\not=\varnothing\}.$$ For each $j\in J$, choose $y_j\in B_n(x_j,e^{-n\varepsilon})\cap Z_{n,t}$. Then $\sum_{i\in I_n^t}c_i\chi_{B_n(x_i,e^{-n\varepsilon})}(y_j)>t$ and hence $\sum_{i\in I_n^t(j)} c_i>t$. For each $j\in J$ and $i\in I_n^t(j)$, there exists $z_i\in B_n(x_j,e^{-n\varepsilon})\cap B_n(x_i,e^{-n\varepsilon})$. Thus for each $j\in J$, $$\sum_{i\in I_n^t(j)} c_i\exp{\varphi_{n}(z_i)}\geq\sum_{i\in I_n^t(j)} c_i\exp{[\varphi_{n}(x_j)-\frac{n\theta}{2}]}>t\exp{[\varphi_n(x_j)-\frac{n\theta}{2}]}.$$It follows that 
	\begin{align*}
		\sum_{j\in J}\exp{[\varphi_{n}(x_j)-\frac{n\theta}{2}]}&<\frac{1}{t}\sum_{j\in J}\sum_{i\in I_n^t(j)} c_i\exp{\varphi_{n}(z_i)}\\
		&\leq \frac{1}{t}\sum_{i\in I_n^t} c_i\exp{\sup_{y\in B_{n}(x_i,e^{-n\varepsilon})}\varphi_{n}(y)}.
	\end{align*}
	Therefore,
	\begin{align*}
		\mathcal{M}^{s+\theta}_{N,\frac{\varepsilon}{2}}(Z_{n,t},\varPhi)&\leq \sum_{j\in J} \exp{[-n (s+\theta)+\varphi_{n}(x_j)]}\\&
		\leq \frac{1}{n^2 t}\sum_{i \in I_n} c_i \exp{[-n s+\sup_{y\in B_{n}(x_i,e^{-n\varepsilon})}\varphi_{n}(y)]}.  
	\end{align*}
	By definition, $Z=\bigcup_{n\geq N} Z_{n,\frac{1}{n^2}t}$. Hence
	$$\mathcal{M}_{N,\frac{\varepsilon}{2}}^{s+\theta}(Z,\varPhi)\leq \sum_{n\geq N} \mathcal{M}_{N,\frac{\varepsilon}{2}}^{s+\theta}(Z_{n,\frac{1}{n^2}t},\varPhi)\leq \frac{1}{t}\sum_{i\in I}c_i \exp{[-n_i s+\sup_{y\in B_{n_i}(x_i,e^{-n_i\varepsilon})}\varphi_{n_i}(y)]}.$$
	Let $t\rightarrow 1$. It follows that $\mathcal{M}^{s+\theta}_{N,\frac{\varepsilon}{2}}(Z,\varPhi)\leq W^{s}_{N,\varepsilon}(Z,\varPhi)$. Letting $N$ tend to $\infty$, we have $\mathcal{M}^{s+\theta}_{\frac{\varepsilon}{2}}(Z,\varPhi)\leq W^s_{\varepsilon}(Z,\varPhi)\leq M^s_{\varepsilon}(Z,\varPhi).$  It follows that $\mathcal{M}_{\frac{\varepsilon}{2}}(Z,\varPhi)\leq W_{\varepsilon}(Z,\varPhi)+\theta\leq M_{\varepsilon}(Z,\varPhi).$ Let $\varepsilon$ tend to $0$. The result follows from the arbitrariness of $\theta>0$ and  Proposition \ref{inter}.
\end{proof}
	\subsection{Proof of the Main Theorem}
	\begin{Lem}\label{Frostman}
		(Frostman's lemma). Let $Z$ be a non-empty compact subset of $X$ and $s\in \mathbb{R}$, $N\in\mathbb{N}$ and $\varepsilon>0$. Set $c=W^s_{N,\varepsilon}(Z,\varPhi)>0$. Then there exists a Borel probability measure $\mu$ on $X$ such that $\mu(Z)=1$ and for any $x\in X$, $n\geq N$, 
		$$\mu(B_n(x,e^{-n\varepsilon}))\leq \frac{1}{c}\exp{[-ns+\sup_{y\in B_{n}(x,e^{-n\varepsilon})}\varphi_{n}(y)]}.$$
	\end{Lem}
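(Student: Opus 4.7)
\emph{Proof plan.} The plan is to construct $\mu$ via a Hahn--Banach plus Riesz-representation argument applied to a sublinear functional on $C(Z)$ built directly from the weighted pressure $W^s_{N,\varepsilon}$. I would work on the compact set $Z$ itself (so that $\mathbf{1}_Z$ plays the role of $\chi_Z$) and then extend the resulting measure trivially to $X$; this automatically forces $\mu(Z)=1$.

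Concretely, for $f\in C(Z)$ I would put
$$p(f):=\frac{1}{c}\inf\sum_{i\in I}c_i\exp\Bigl[-n_is+\sup_{y\in B_{n_i}(x_i,e^{-n_i\varepsilon})}\varphi_{n_i}(y)\Bigr],$$
where the infimum is over all finite or countable families $\{(B_{n_i}(x_i,e^{-n_i\varepsilon}),c_i)\}_{i\in I}$ with $c_i>0$, $x_i\in X$, $n_i\geq N$, satisfying $\sum_{i}c_i\chi_{B_{n_i}(x_i,e^{-n_i\varepsilon})}(z)\geq f(z)$ for every $z\in Z$ (the empty family is admissible whenever $f\leq 0$ on $Z$). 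Since $Z$ is compact and covered by finitely many balls of order $N$, $p$ is finite on $C(Z)$; concatenating near-optimal covers gives subadditivity, rescaling the $c_i$'s gives positive homogeneity, and the admissible families for $p(\mathbf{1}_Z)$ are precisely those in the definition of $W^s_{N,\varepsilon}(\chi_Z,\varPhi)$, so $p(\mathbf{1}_Z)=\tfrac{1}{c}W^s_{N,\varepsilon}(\chi_Z,\varPhi)=1$.

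Next, on the line $\mathbb{R}\cdot\mathbf{1}_Z\subset C(Z)$ define $\ell_0(\alpha\mathbf{1}_Z):=\alpha$; this satisfies $\ell_0\leq p$, since $\ell_0(\alpha\mathbf{1}_Z)=\alpha=p(\alpha\mathbf{1}_Z)$ for $\alpha\geq 0$, while $\ell_0(\alpha\mathbf{1}_Z)=\alpha<0=p(\alpha\mathbf{1}_Z)$ for $\alpha<0$. The Hahn--Banach theorem extends $\ell_0$ to a linear $\ell\colon C(Z)\to\mathbb{R}$ with $\ell\leq p$. Positivity is automatic ($f\geq 0\Rightarrow\ell(-f)\leq p(-f)=0$), as is the bound $|\ell(f)|\leq\max(p(f),p(-f))\leq\|f\|_\infty\,p(\mathbf{1}_Z)=\|f\|_\infty$. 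Riesz representation then produces a Borel probability measure $\mu$ on $Z$ with $\ell(g)=\int g\,d\mu$, and extending by $\mu(E):=\mu(E\cap Z)$ gives a Borel probability measure on $X$ with $\mu(Z)=1$.

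Finally, for the ball estimate at $B:=B_n(x,e^{-n\varepsilon})$ with $n\geq N$ and $x\in X$, inner regularity of $\mu|_Z$ on the relatively open set $B\cap Z\subset Z$ yields
$$\mu(B)=\sup\bigl\{\ell(g):g\in C(Z),\ 0\leq g\leq\chi_{B\cap Z}\bigr\}.$$
For any such $g$ the singleton family $\{(B,1)\}$ (with $n_1=n$, $x_1=x$, $c_1=1$) is admissible for $p(g)$ because $\chi_B|_Z\geq\chi_{B\cap Z}\geq g$, so $\ell(g)\leq p(g)\leq\frac{1}{c}\exp[-ns+\sup_{y\in B}\varphi_n(y)]$, and taking the supremum over $g$ finishes the proof. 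The main obstacle I expect is the bookkeeping to show that $p$ is a bona fide finite sublinear functional dominating $\ell_0$; once that is in place, the Hahn--Banach/Riesz step is classical, and the $1/c$ normalization is exactly calibrated so that the single-ball cover realizes the target bound in the last step.
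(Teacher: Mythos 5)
Your proposal is correct and is essentially the argument the paper itself invokes: the paper omits the details and simply cites Feng--Huang's dynamical Frostman lemma, whose proof is exactly this Hahn--Banach/Riesz scheme (a finite sublinear functional built from $W^s_{N,\varepsilon}$, extension of $\alpha\mathbf{1}\mapsto\alpha$, positivity, Riesz representation, and the single-ball admissible family for the final estimate), with the only cosmetic difference that Feng--Huang work on $C(X)$ with $f\mapsto W^s_{N,\varepsilon}(\chi_Z f)$ rather than on $C(Z)$. No substantive divergence or gap.
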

	\begin{proof}
		The proof of \cite{FH} still works by replacing the ball $B_n(x,\varepsilon)$ by $B_n(x,e^{-n\varepsilon})$ and adding a potential function. So we omit the detail.
	\end{proof}
	The proof of  the main result is inspired by \cite{TCZ,YCZ,ZC}.
	\begin{proof}[Proof of the main theorem]
		For every $\mu\in M(X)$ with $\mu(Z)=1$ and $\varepsilon>0$, by definition we have 
		\begin{align}\label{less}
			P_{\mu}^{\widetilde{K}}(T,\varPhi,\varepsilon)\leq M_{\varepsilon}(Z,\varPhi). 
		\end{align}
		By definition, 
		$$\underline{P}_{\mu}^{\widetilde{BK}}(T,\varPhi,\varepsilon)=\int \underline{P}_{\mu}^{\widetilde{BK}}(T,\varPhi,\varepsilon,x)d\mu.$$
		It follows that 
		$$Z_\eta=\{x\in Z:\underline{P}_{\mu}^{\widetilde{BK}}(T,\varPhi,\varepsilon,x)\geq \underline{P}_{\mu}^{\widetilde{BK}}(T,\varPhi,\varepsilon)-\eta\}$$ has positive $\mu-$measure for all $\eta>0$. 
		Fix $\eta>0$ and set $s=\underline{P}_{\mu}^{\widetilde{BK}}(T,\varPhi,\varepsilon)-\eta$. For each $k\in \mathbb{N}$, set 
		$$Z_k=\{x\in Z_{\eta}:\frac{-\log \mu(B_n(x,e^{-n\varepsilon}))+\varphi_{n}(x)}{n}>s,\forall n\geq k\}.$$
		We have that $Z_k\subset Z_{k+1}$ and $\bigcup_{k=1}^{\infty}Z_k=Z_{\eta}$. Thus we can choose $k^*\in\mathbb{N}$ such that $\mu(Z_{k^*})>\frac{1}{2}\mu(Z_{\eta})>0$. Denote by $Z^*=Z_{k^*}$. For $x\in Z^*$ and $n\geq k^*$, we have
		$$\mu(B_n(x,e^{-n\varepsilon}))< \exp{[-ns+\varphi_{n}(x)]}.$$
		Fix $0<\delta<\mu(Z^*)$. For sufficiently large $k\geq k^*$ and $\mathcal{F}=\{B_{n_i}(y_i,e^{-2n_i\varepsilon})\}_{i\in I}$ a countably family with $n_i\geq k$ for each $i\in I$ and $\mu(\bigcup_{i\in I}B_{n_i}(y_i,e^{-2n_i\varepsilon}))\geq 1-\delta$. Without loss of generality, assume that $Z^*\cap B_{n_i}(y_i,e^{-2n_i\varepsilon})\not=\varnothing$ for each $i\in I$. Thus there exists $x_i\in Z^*\cap B_{n_i}(y_i,e^{-2n_i\varepsilon})$. Hence $$B_{n_i}(y_i,e^{-2n_i\varepsilon})\subset B_{n_i}(x_i,2e^{-2n_i\varepsilon})\subset B_{n_i}(x_i,e^{-n_i\varepsilon}).$$
		It follows that 
		\begin{align*}
			&\sum_{i\in I} \exp{[-n_i s+\sup_{y\in B_{n_i}(x_i,e^{-2n_i\varepsilon})}\varphi_{n_i}(y)]}\\
			\geq &\sum_{i\in I} e^{-n_i s+\varphi_{n_i}(x_i)}\\
			\geq &\sum_{i\in I}\mu(B_{n_i}(x_i,e^{-n_i\varepsilon}))\\
			\geq &\sum_{i\in I}\mu(B_{n_i}(y_i,e^{-2 n_i\varepsilon}))\\
			\geq &\mu(Z^*\cap \bigcup_{i\in I}B_{n_i}(y_i,e^{-2n_i\varepsilon}))\\
			\geq & \mu(Z^*)-\delta.
		\end{align*}
		Therefore, $$\Lambda^{s}_{k,2\varepsilon}(\mu,\varPhi,\delta)\geq \mu(Z^*)-\delta>0.$$
		It follows that $$\Lambda^{s}_{2\varepsilon}(\mu,\varPhi,\delta)=\lim_{k\rightarrow\infty}\Lambda^{s}_{k,2\varepsilon}(\mu,\varPhi,\delta)\geq \mu(Z^*)-\delta>0.$$
		By definition we have $\Lambda_{2\varepsilon}(\mu,\varPhi,\delta)\geq s$. Letting $\delta$ tend to $0$, by the arbitrariness of $\eta >0$, we have $P_{\mu}^{\tilde{K}}(T,\varPhi,2\varepsilon)\geq \underline{P}_{\mu}^{\widetilde{BK}}(T,\varPhi,\varepsilon)$. 

		Thus together with (\ref{less}), we have
		\begin{align*}
			&\lim_{\varepsilon\rightarrow 0}\sup\{\underline{P}_{\mu}^{\widetilde{BK}}(T,\varPhi,\varepsilon):\mu \in M(X),\mu(Z)=1\}\\
			\leq &\lim_{\varepsilon\rightarrow 0}\sup\{P_{\mu}^{\tilde{K}}(T,\varPhi,\varepsilon):\mu \in M(X),\mu(Z)=1\}\leq 	P^{\tilde{B}}_Z(T,\varPhi).
		\end{align*}
		Next we show that $$	M_{\varepsilon}(Z,\varPhi)\leq \sup\{\underline{P}_{\mu}^{\widetilde{BK}}(T,\varPhi,\varepsilon):\mu\in M(X),\mu(Z)=1\}.$$
		We assume that $M_{\varepsilon}(Z,\varPhi)\not=-\infty$, otherwise there is nothing to prove.  By (\ref{tempered}),  we have 
		$$\lim_{\varepsilon\rightarrow 0}\liminf_{n\rightarrow \infty}\frac{\varphi_{n}(\varepsilon)}{n}=0.$$
		For fixed $\beta>0$, we have that
		$$\liminf_{n\rightarrow \infty}\frac{\varphi_{n}(\varepsilon)}{n}>-\beta$$
		for all sufficiently small $\varepsilon>0$. By Proposition \ref{equality}, we have $P_{Z}^{\tilde{B}}(T,\varPhi)=P_{Z}^{\widetilde{WB}}(T,\varPhi)$, that is 
		$$\lim_{\varepsilon\rightarrow 0} M_{\varepsilon}(Z,\varPhi)=\lim_{\varepsilon\rightarrow 0} W_{\varepsilon}(Z,\varPhi).$$
		Fix small $\beta>0$ as above, one can choose small $\varepsilon>0$ such that 
		$$|M_{\varepsilon}(Z,\varPhi)-W_{\varepsilon}(Z,\varPhi)|<\beta.$$
		Take such an $\varepsilon>0$.  Let $s=M_{\varepsilon}(Z,\varPhi)-\beta$. Thus $W_{\varepsilon}(Z,\varPhi)>s$. By definition, we can choose $N\in \mathbb{N}$ large enough such that $c=W^s_{N,\varepsilon}(Z,\varPhi)>0$. By Lemma \ref{Frostman}, there exists $\mu\in M(X)$ with $\mu(Z)=1$ such that
		$$\mu(B_n(x,e^{-n\varepsilon}))\leq \frac{1}{c}\exp{[-ns+\sup_{y\in B_{n}(x,e^{-n\varepsilon})}\varphi_{n}(y)]}$$ 
		for any $x\in X$ and $n\geq N$. Therefore,
		$$\underline{P}_{\mu}^{\widetilde{BK}}(T,\varPhi,\varepsilon,x)\geq s+\liminf_{n\rightarrow \infty}\frac{\varphi_n(\varepsilon)}{n}\geq M_{\varepsilon}(Z,\varPhi)-2\beta$$
		for all $x\in X$. Hence 
		$$\underline{P}_{\mu}^{\widetilde{BK}}(T,\varPhi,\varepsilon)=\int \underline{P}_{\mu}^{\widetilde{BK}}(T,\varPhi,\varepsilon,x)d\mu\geq M_{\varepsilon}(Z,\varPhi)-2\beta.$$
		By the arbitrariness of  $\beta>0$,  the result follows.

	\end{proof}
\begin{Cor}
	Let $(X,T)$ be a TDS and $Z$ be a non-empty compact subset of $X$. Assume that $\varPhi=\{\varphi_n: X\rightarrow \mathbb{R}\}$ is a sequence of additive continuous potentials, that is $\varphi_n(x)=\varphi(x)+\varphi(T(x))+...+\varphi(T^{n-1}(x))$ for some continuous function $\varphi: X\rightarrow \mathbb{R}$. 
	Then 
	\begin{align*}
		P^{\tilde{B}}_Z(T,\varPhi)&=\lim_{\varepsilon\rightarrow 0}\sup\{\underline{P}_{\mu}^{\widetilde{BK}}(T,\varPhi,\varepsilon):\mu \in M(X),\, \mu(Z)=1\}\\
		&=\lim_{\varepsilon\rightarrow 0}\sup\{P_{\mu}^{\widetilde{K}}(T,\varPhi,\varepsilon):\mu \in M(X),\, \mu(Z)=1\}.
	\end{align*}
\end{Cor}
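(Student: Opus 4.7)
The plan is to deduce the Corollary immediately from the Main Theorem by verifying that any additive sequence of the form $\varphi_n(x)=\sum_{j=0}^{n-1}\varphi(T^j(x))$, with $\varphi\in C(X,\mathbb{R})$, automatically satisfies the tempered distortion condition \eref{tempered}. Once this is established, no further work is needed: both equalities are instances of the Main Theorem.

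First I would exploit the compactness of $X$: every continuous $\varphi:X\to\mathbb{R}$ is uniformly continuous, so it admits a modulus of continuity $\omega_\varphi:[0,\infty)\to[0,\infty)$ with $\omega_\varphi(r)\to 0$ as $r\to 0^+$ and $|\varphi(u)-\varphi(v)|\le\omega_\varphi(d(u,v))$ for all $u,v\in X$. Given $x\in X$ and $y\in B_n(x,e^{-n\varepsilon})$, the defining inequality $d(T^j x,T^j y)<e^{-n\varepsilon}$ for $0\le j\le n-1$ yields the pointwise estimate $|\varphi(T^j x)-\varphi(T^j y)|\le\omega_\varphi(e^{-n\varepsilon})$ for each such $j$. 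Summing over $j$ gives
\[
|\varphi_n(x)-\varphi_n(y)|\le n\,\omega_\varphi(e^{-n\varepsilon}),
\]
and taking supremum over admissible $(x,y)$ produces $\varphi_n(\varepsilon)\le n\,\omega_\varphi(e^{-n\varepsilon})$, i.e.\ $\varphi_n(\varepsilon)/n\le\omega_\varphi(e^{-n\varepsilon})$.

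For any fixed $\varepsilon>0$, $e^{-n\varepsilon}\to 0$ as $n\to\infty$, so $\omega_\varphi(e^{-n\varepsilon})\to 0$ and therefore $\limsup_{n\to\infty}\varphi_n(\varepsilon)/n=0$. In fact this is strictly stronger than \eref{tempered}, since the $\limsup$ vanishes for every fixed $\varepsilon>0$, not merely in the limit $\varepsilon\to 0$. With \eref{tempered} verified, the Main Theorem applies directly to the additive sequence $\varPhi=\{\varphi_n\}$, and the two claimed equalities follow at once.

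There is essentially no serious obstacle here; the content of the Corollary is simply that the hypothesis of the Main Theorem subsumes the classical additive setting, and the only thing to check is the uniform-continuity bound above. The estimate is textbook and requires no new ideas beyond compactness of $X$.
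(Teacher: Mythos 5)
Your proposal is correct and follows the same route as the paper: the Corollary is deduced by checking that additive potentials generated by a continuous $\varphi$ on the compact space $X$ satisfy the tempered distortion condition \eref{tempered}, after which the Main Theorem applies verbatim. The paper merely asserts this verification in one line, whereas you supply the (correct) uniform-continuity estimate $\varphi_n(\varepsilon)\le n\,\omega_\varphi(e^{-n\varepsilon})$ that justifies it.
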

\begin{proof}
	For additive continuous potentials, the condition (\ref{tempered}) holds due to the continuity of the potentials. Thus the result follows.
\end{proof}

\end{document}